\documentclass[11pt]{article}

\usepackage[margin=1in]{geometry}
\usepackage{amsmath,amssymb,amsthm,mathtools,mathrsfs}
\usepackage{booktabs}
\usepackage{graphicx}
\usepackage{microtype}
\usepackage[hidelinks]{hyperref}
\graphicspath{{./}}

\newtheorem{theorem}{Theorem}[section]
\newtheorem{proposition}[theorem]{Proposition}
\newtheorem{lemma}[theorem]{Lemma}

\newtheorem{remark}[theorem]{Remark}

\newcommand{\R}{\mathbb R}
\newcommand{\K}{\mathcal K}
\newcommand{\T}{\mathcal T}
\newcommand{\E}{\mathcal E}
\newcommand{\G}{G}

\title{Generalized Images for Brownian First Passage\\
to Monomial Boundaries}

\author{Gerardo Hern\'andez-del-Valle\thanks{Centro de Estudios Monetarios
Latinoamericanos (CEMLA), Mexico City, Mexico. Corresponding author.
E-mail: \texttt{ghernandez@cemla.org}}}

\date{}

\begin{document}
\maketitle

\begin{abstract}
We study the first passage of Brownian motion to a monomial moving boundary.
For every integer degree at least two, we construct a time-independent signed
image measure whose heat potential is the killed Brownian transition density
on a nonzero interval of the natural Brownian time scale.  The image density
is determined by a second-kind Volterra equation in the image coordinate,
not by the classical Volterra equations in time for the hitting density.  Its
kernel is an explicit Wright function, and the first-passage density is
recovered afterwards from the boundary flux.  Exponential bounds establish
convergence, while a beta-moment transform connects the Wright kernel with
the finite-order differential hierarchy previously obtained from
transported moving zeros.  A cubic numerical experiment independently
checks the resulting density against simulated Brownian paths.
\end{abstract}

\medskip
\noindent\textbf{2020 Mathematics Subject Classification.}
Primary 60J65; Secondary 45D05, 35K05, 60J60.

\medskip
\noindent\textbf{Keywords.}
Brownian motion, first-passage time, moving boundary, method of images,
Volterra equation, Wright function, heat equation, transported zero.

\section{Introduction}

Let $(B_t)_{t\ge0}$ be standard Brownian motion with $B_0=x_0$, and let
$F$ be a continuous boundary with $F(0)>x_0$.  The first-passage time is
\[
 \tau_F=\inf\{t>0:B_t\ge F(t)\}.
\]
Closed expressions for its law are exceptional.  Classical approaches include
the method of images, integral equations, changes of measure, and moving-domain
heat equations; see, among others, Lerche~\cite{Lerche}, Durbin~\cite{Durbin},
Kahale~\cite{Kahale}, and Jaimungal, Kreinin and Valov~\cite{JKV}.

The inverse method of images asks whether a prescribed boundary admits a
representing measure.  A recent systematic treatment by Christensen, Hallmann
and Klein~\cite{CHK} develops a duality and linear-programming framework,
including sufficient conditions for positive representing measures and a
convergent numerical procedure.  Positivity is a substantive restriction.  In
the convex monomial problem considered here, the continuous part of the image
measure begins with a negative term.  We therefore work with signed exterior
images.  Our objective is not a general representability criterion, but an
explicit analytic construction for the family
\begin{equation}\label{eq:boundary}
 F(t)=d+\rho t^p,\qquad p\ge2.
\end{equation}

The present problem is the probabilistic continuation of the transported-zero
program for the heat semigroup.  If
\[
 P_t=\exp\!\left(\frac t2\partial_x^2\right),
\]
the inverse transported-zero problem prescribes $F$ and seeks initial objects
$\gamma$ satisfying
\begin{equation}\label{eq:tz}
 (P_t\gamma)(F(t))=0.
\end{equation}
It is useful to extend the inverse kernel explicitly to distributions and set
\begin{equation}\label{eq:distributional-kernel}
 \mathscr K_{\mathrm{dist}}(F;T)
 =\left\{\gamma\in\mathcal D'(\R):
 (P_t\gamma)(F(t))=0,\ 0<t<T\right\},
\end{equation}
whenever the displayed traces are defined.  Previous work studied
function-valued selections, stationary differential operators and the grading
$2p-1$ associated with monomial boundaries
\cite{HDVGuerra,HDVMatrices,HDVHeatAiry,HDVGrading,HDVInverse}.  Here we make
the distributional selection
\[
 \gamma=\delta_{x_0}-\mu,
\]
where $\mu$ is supported outside the initial domain.  Thus the new construction
belongs to the same inverse kernel, but its purpose is probabilistic: the heat
transport of $\gamma$ will be the killed transition density.

\subsection*{From the Airy spectral construction to the Volterra equation}

The route to the present equation is central to the result.  In the quadratic
case, the probabilistic initial condition can be imposed by a spectral method.
The stationary Airy family may be written, after normalization, in the form
\begin{equation}\label{eq:airy-spectral-family}
 \phi''(x)-(\kappa x+b)\phi(x)=0.
\end{equation}
Under heat transport, the parameter $b$ remains in the transported solution
but disappears from the compatibility equation that determines the quadratic
zero curve.  It therefore supplies a spectral degree of freedom without
altering the boundary.  The zeros of the Airy function give the eigenvalues of
the associated Sturm--Liouville problem, its eigenfunctions form a complete
orthogonal family, and the closure relation can be used to reproduce the
required Dirac-type initial datum.  This is the mechanism used in
\cite{HDVGuerra} to pass from an individual zero-producing heat solution to a
first-passage density.

The same route was not available for higher monomials.  The direct
transported-operator theory produced individual heat solutions with moving
zeros and the compatibility-matrix construction made the boundary equations
systematic \cite{HDVMatrices}.  The Heat--Airy extension reorganized those
compatibility conditions through an auxiliary commuting flow
\cite{HDVHeatAiry}.  Stationary grading then showed that the simplest
finite-order monomial ansatz does not continue unchanged beyond the low-degree
cases and identified $2p-1$ as the characteristic coefficient step
\cite{HDVGrading}.  Subsequent inverse analysis showed that a prescribed
monomial boundary determines a large kernel rather than a unique initial
function: sparse and constant-flux normalizations select different elements,
with different differential structures \cite{HDVInverse,HDVMonomial}.

Those developments solved the problem of constructing and classifying
zero-producing functions, but they did not supply the analogue of Airy
completeness needed for Brownian first passage.  For higher monomials we had no
known spectral parameter invisible to the boundary equation, no self-adjoint
Sturm--Liouville realization, and no closure relation that reconstructs
$\delta_{x_0}$.  This is the precise point at which the Volterra equation
enters.

Rather than first constructing a spectral family $\{\gamma_\lambda\}$ and
then searching for coefficients satisfying
\[
 \delta_{x_0}=\int c(\lambda)\gamma_\lambda\,d\lambda,
\]
we impose the probabilistic interior atom from the outset and solve directly
for the compensating exterior distribution:
\begin{equation}\label{eq:affine-selection}
 \gamma_{F,x_0}=\delta_{x_0}-\mu,
 \qquad \operatorname{supp}\mu\subseteq[d,\infty).
\end{equation}
Consequently, the central object of this paper is the affine slice
\begin{equation}\label{eq:affine-slice}
 \mathscr K_{\mathrm{dist}}(F;T)
 \cap
 \left\{\delta_{x_0}-\mu:
 \operatorname{supp}\mu\subseteq[d,\infty)\right\}.
\end{equation}
The boundary condition for \eqref{eq:affine-selection} is linear in $\mu$.
For a monomial boundary, scaling and Laplace inversion turn it into the
second-kind Volterra equation derived below.  The Volterra density therefore
plays, for general monomial boundaries, the role played by the
Sturm--Liouville expansion coefficients in the quadratic case.  It is not an
unrelated alternative method; it is the distributional realization of the
inverse transported-zero selection when the spectral construction is no
longer available.

This distinction is essential.  It was already known from the classical
Fortet formulation, and from our earlier work, that the first-passage density
itself satisfies a Volterra equation in time.  The contribution here is not a
new iterative scheme for that density.  Instead, the unknown in
\eqref{eq:volterra-density} is a time-independent signed density in the
exterior image coordinate.  Once this image density has been determined, the
first-passage density follows directly from a single boundary-flux formula.
To the best of our knowledge, the Wright-function kernel obtained below has
not previously been identified in the inverse method of images for Brownian
first passage to monomial moving boundaries.  Our novelty claim concerns this
image-coordinate equation and its connection with the transported-zero ODE
hierarchy, not the classical Volterra characterization of the hitting-time
density.

The same genealogy explains the reappearance of the integer $2p-1$.  In the
stationary theory it is the grading step.  In the present distributional
selection it determines
\[
 \beta=\rho a^{2p-1},\qquad
 \kappa\asymp|\beta|^{2/(2p-1)},\qquad
 t\asymp|\rho|^{-2/(2p-1)}.
\]
Thus the Volterra scaling is the image-theoretic counterpart of stationary
grading.

The main contributions are as follows.
\begin{enumerate}
\item The image equation for \eqref{eq:boundary} is converted, after a
Laplace change of variables, into an explicit Volterra equation of the second
kind.
\item The Volterra kernel is identified with a Wright-type solution of an
order-$p$ ODE, and a beta-moment transmutation connects it exactly to the
order-$2p-1$ hypergeometric stationary equation from the transported-zero
theory.
\item Factoring the Volterra operators at $\beta$ and $-\beta$ reveals a
defect whose coefficients satisfy a finite-difference cancellation theorem.
\item A compact--tail argument yields exponential-type control on the signed
image density.  The exponent $2/(2p-1)$ agrees with the Brownian balance
$|\rho|t^p\asymp\sqrt t$.
\item The resulting heat potential is identified with the killed Brownian
kernel, giving an actual first-passage density on a nonzero interval.
\end{enumerate}

The preceding papers enter this construction in distinct ways.  The
published quadratic analysis supplies the distributional normalization and
the spectral benchmark; compatibility matrices and Heat--Airy transport
organize the direct boundary equations; stationary grading identifies the
step $2p-1$ and the obstruction to the naive finite-order ansatz; the inverse
problem exposes the nonuniqueness of prescribed-zero data; and the monomial
hierarchy provides the constant-flux hypergeometric selection.  None of those
results constructs the exterior image measure or the killed Brownian kernel
for higher monomial boundaries.  Those are the new probabilistic objects of
the present paper.

We do not claim that the interval obtained below is maximal, nor that the
constant in the exponential-type estimate is sharp.  The quadratic defect
predicts a distinguished leading constant, but the proof uses a nonoptimal
majorant.

\section{The exterior image equation}

Write
\[
 x_0=d-a,\qquad a>0,
\]
and let
\[
 \G(t,x)=\frac1{\sqrt{2\pi t}}\exp\!\left(-\frac{x^2}{2t}\right).
\]
We seek a time-independent signed measure $\mu$ supported on $[d,\infty)$
such that
\begin{equation}\label{eq:heat-image}
 P(t,x)=\G(t,x-x_0)-\int_d^\infty\G(t,x-y)\,\mu(dy)
\end{equation}
satisfies $P(t,F(t))=0$.

Put $y=d+z$.  Cancelling the common factor generated by $\rho^2t^{2p}$ in
the boundary identity gives
\begin{equation}\label{eq:image-z}
 e^{-a^2/(2t)-a\rho t^{p-1}}
 =\int_0^\infty e^{-z^2/(2t)+\rho zt^{p-1}}\,\widetilde\mu(dz).
\end{equation}
For $\rho=0$, the solution is the classical reflected atom
$\widetilde\mu=\delta_a$.

Scale
\begin{equation}\label{eq:scaling}
 z=ar,\qquad t=a^2s,\qquad
 \lambda=\frac1{2s},\qquad \beta=\rho a^{2p-1},
\end{equation}
and push the measure forward under $r\mapsto u=r^2$.  Equation
\eqref{eq:image-z} becomes
\begin{equation}\label{eq:laplace-image}
 e^{-\lambda-\beta/(2^{p-1}\lambda^{p-1})}
 =\int_1^\infty
 e^{-\lambda u+\beta\sqrt u/(2^{p-1}\lambda^{p-1})}
 \,\nu_{\beta,p}(du).
\end{equation}

\begin{remark}
The image coordinate is
\begin{equation}\label{eq:correct-push}
 y=d+a\sqrt u.
\end{equation}
In particular, the atom $u=1$ is located at $d+a=2d-x_0$, the reflected
starting point.
\end{remark}

\section{Laplace inversion and the Volterra equation}

Put $q=p-1$.  For $u>v\ge1$, define
\begin{equation}\label{eq:kernel}
 K_{\beta,p}(u,v)
 =\sum_{j\ge1}
 \frac{\beta^jv^{j/2}(u-v)^{jq-1}}
 {2^{jq}j!\Gamma(jq)}.
\end{equation}
The series is locally uniformly convergent on the Volterra triangle.  Let
\[
 (\K_{\beta,p}h)(u)=\int_1^uK_{\beta,p}(u,v)h(v)\,dv,
 \qquad \T_{\beta,p}=I+\K_{\beta,p}.
\]

\begin{proposition}[Volterra form]\label{prop:volterra}
Equation \eqref{eq:laplace-image} is equivalent, in the class of locally
finite measures of suitable exponential type, to
\begin{equation}\label{eq:operator-image}
 \T_{\beta,p}\nu_{\beta,p}=\T_{-\beta,p}\delta_1.
\end{equation}
Writing
\[
 \nu_{\beta,p}=\delta_1+n_{\beta,p}(u)\,du,
\]
the density solves
\begin{equation}\label{eq:volterra-density}
 n_{\beta,p}(u)+\int_1^uK_{\beta,p}(u,v)n_{\beta,p}(v)\,dv
 =K_{-\beta,p}(u,1)-K_{\beta,p}(u,1).
\end{equation}
\end{proposition}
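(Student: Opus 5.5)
The plan is to expand both sides of \eqref{eq:laplace-image} as power series in $\beta$ and read off each term as a Laplace transform in the variable $\lambda$, using the elementary identity
\[
 \lambda^{-m}e^{-\lambda v}=\int_v^\infty e^{-\lambda u}\,\frac{(u-v)^{m-1}}{\Gamma(m)}\,du,
 \qquad m>0,\ \lambda>0 .
\]
Then \eqref{eq:operator-image} is simply the statement that two measures on $[1,\infty)$ have the same Laplace transform. Injectivity of the Laplace transform, on measures of suitable exponential type, gives the equivalence.

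First, on the right-hand side I expand
\[
 \exp\!\bigl(\beta\sqrt v/(2^q\lambda^q)\bigr)=1+\sum_{j\ge1}\frac{\beta^j v^{j/2}}{2^{jq}j!\,\lambda^{jq}}.
\]
For each $j$, I replace $\lambda^{-jq}e^{-\lambda v}$ by the displayed integral with $m=jq$. After interchanging the sum over $j$, the $v$-integral and the $u$-integral, the right side becomes
\[
 \int_1^\infty e^{-\lambda u}\,\nu(du)
 +\int_1^\infty e^{-\lambda u}\Bigl(\int_1^u K_{\beta,p}(u,v)\,\nu(dv)\Bigr)du,
\]
which is the Laplace transform of $\T_{\beta,p}\nu_{\beta,p}$, with $K_{\beta,p}$ exactly as in \eqref{eq:kernel}. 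On the left-hand side the same computation applies with $\nu=\delta_1$ and $\beta$ replaced by $-\beta$; the factor $v^{j/2}=1$ at $v=1$, and the sign comes from $(-\beta)^j$. So the left side is the Laplace transform of $\T_{-\beta,p}\delta_1$. Uniqueness of the Laplace transform for $\lambda$ in a right half-line then yields \eqref{eq:operator-image}. The converse direction is the same chain of equalities read backwards.

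The main obstacle is justifying the interchanges (Fubini/Tonelli). I would do this with absolute majorants. Replacing $\beta$ by $|\beta|$ gives the bound
\[
 |K_{\beta,p}(u,v)|\le \sum_j \frac{|\beta|^j u^{j/2}(u-v)^{jq-1}}{2^{jq}j!\,\Gamma(jq)},
\]
and the absolute series corresponding to the right side is $\int e^{-\lambda v+|\beta|\sqrt v/(2^q\lambda^q)}\,|\nu|(dv)$. This is finite as soon as $|\nu|$ has exponential type below $\lambda$, because $\sqrt v=o(v)$. This is precisely how I would make precise the class ``locally finite measures of suitable exponential type'': for instance, require $\int e^{-\lambda_0 v}|\nu|(dv)<\infty$ for some $\lambda_0$, and restrict to $\lambda>\lambda_0$. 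The same majorant also gives local uniform convergence of \eqref{eq:kernel}, since $\Gamma(jq)j!$ grows superexponentially. Finally, $\T_{\beta,p}\nu$ inherits exponential type from $\nu$, so uniqueness applies.

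For \eqref{eq:volterra-density}, I substitute $\nu_{\beta,p}=\delta_1+n_{\beta,p}\,du$ and use $\T_{\beta,p}\delta_1=\delta_1+K_{\beta,p}(u,1)\,du$. The atoms $\delta_1$ cancel on the two sides of \eqref{eq:operator-image}. What remains is an identity between absolutely continuous parts, which is exactly \eqref{eq:volterra-density}. Conversely, given a solution $n$ of \eqref{eq:volterra-density}, the measure $\delta_1+n\,du$ satisfies \eqref{eq:operator-image} by reversing these steps.
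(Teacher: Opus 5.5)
Your proposal is correct and follows the paper's own route: you expand $e^{\beta\sqrt v/(2^q\lambda^q)}$ in powers of $\beta$, invert each term via $\lambda^{-jq}e^{-\lambda v}=\int_v^\infty e^{-\lambda u}(u-v)^{jq-1}/\Gamma(jq)\,du$, recognize both sides as Laplace transforms of $\T_{\beta,p}\nu_{\beta,p}$ and $\T_{-\beta,p}\delta_1$, and cancel the atom at $u=1$. Your Tonelli majorant $\int e^{-\lambda v+|\beta|\sqrt v/(2^q\lambda^q)}\,|\nu|(dv)$ and your explicit exponential-type class justify the interchanges more carefully than the paper's appeal to local uniform convergence, which alone does not control the integrals over $[1,\infty)$.
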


\begin{proof}
Expand the second exponential in \eqref{eq:laplace-image}.  The identity
\[
 \mathcal L^{-1}\{e^{-\lambda v}\lambda^{-jq}\}(u)
 =\frac{(u-v)_+^{jq-1}}{\Gamma(jq)}
\]
gives \eqref{eq:operator-image} term by term.  Local uniform convergence
justifies the interchange.  Separating the atom gives
\eqref{eq:volterra-density}.  Conversely, taking Laplace transforms recovers
\eqref{eq:laplace-image}.
\end{proof}

\begin{remark}[Two distinct Volterra equations]\label{rem:two-volterra}
Equation \eqref{eq:volterra-density} is not the classical Fortet equation for
the first-passage density.  In the present notation, the latter is
\begin{equation}\label{eq:fortet-general}
 \G(t,F(t)-x_0)
 =\int_0^t q(s)\G(t-s,F(t)-F(s))\,ds,
\end{equation}
a first-kind equation in the time variable whose unknown is the hitting-time
density $q$.  By contrast, \eqref{eq:volterra-density} is a second-kind
equation in the image coordinate $u$ whose unknown is the time-independent
signed exterior image density $n_{\beta,p}$.  After $n_{\beta,p}$ is found,
$q$ is obtained directly from the boundary flux \eqref{eq:explicit-flux}; no
iteration in $q$ is involved.  The Neumann series used below is only an
analytic device for proving invertibility on the tail, not the definition of
the construction or a required numerical algorithm.

For comparison, a classical second-kind equation for the hitting density is
also available under the usual differentiability assumptions; see
Durbin~\cite{Durbin}, Peskir~\cite{Peskir}, and Zipkin~\cite{Zipkin}.  If
$h(t)=F(t)-x_0$, it takes the form
\begin{align}
 q(t)={}&\left(\frac{h(t)}t-h'(t)\right)\G(t,h(t))\notag\\
 &+\int_0^t\left[
 h'(t)-\frac{h(t)-h(s)}{t-s}
 \right]\G(t-s,h(t)-h(s))q(s)\,ds.
 \label{eq:durbin-general}
\end{align}
For $h(t)=a+\rho t^p$, this becomes
\begin{align}
 q_{\rho,p}(t)={}&
 \left[\frac at-(p-1)\rho t^{p-1}\right]
 \G(t,a+\rho t^p)\notag\\
 &+\rho\int_0^t\left[
 pt^{p-1}-\frac{t^p-s^p}{t-s}
 \right]\G(t-s,\rho(t^p-s^p))q_{\rho,p}(s)\,ds.
 \label{eq:durbin-monomial}
\end{align}
Equations \eqref{eq:fortet-general} and \eqref{eq:durbin-monomial} both have
$q$ as their unknown and integrate over past time.  Neither is the
image-coordinate equation \eqref{eq:volterra-density}.
\end{remark}

Standard local Volterra theory gives a unique continuous solution of
\eqref{eq:volterra-density} on every interval $[1,U]$.  It also gives the
small-parameter expansion
\begin{equation}\label{eq:first-order}
 n_{\beta,p}(u)
 =-\frac{\beta}{2^{p-2}\Gamma(p-1)}(u-1)^{p-2}+O_{p,U}(\beta^2).
\end{equation}
Thus for $\rho>0$ the continuous image begins negatively; the construction
is genuinely signed.

\section{The hidden differential structure of the image kernel}

The Volterra kernel is not an arbitrary analytic kernel.  It is generated by
a Wright-type function whose differential equation is connected, by an
explicit beta-moment transform, with the finite-order stationary equation
obtained previously from the constant-flux transported-zero selection
\cite{HDVMonomial}.  We record that connection here.

Put
\[
 m=p-1,\qquad N=2p-1,
\]
and define Wright's generalized Bessel function in the normalization
needed here \cite{Wright}
\begin{equation}\label{eq:wright-seed}
 W_m(z)=\sum_{j\geq1}\frac{z^j}{j!\,\Gamma(mj)}.
\end{equation}
Then \eqref{eq:kernel} becomes
\begin{equation}\label{eq:kernel-wright}
 K_{\beta,p}(u,v)
 =\frac1{u-v}
 W_{p-1}\!\left(
 \frac{\beta\sqrt v\,(u-v)^{p-1}}{2^{p-1}}
 \right).
\end{equation}
In particular, if $r=u-v$ and
$k_{\beta,p}(r,v)=K_{\beta,p}(v+r,v)$, then termwise differentiation gives
\begin{equation}\label{eq:kernel-ode}
 r\,\partial_r^p k_{\beta,p}
 +p\,\partial_r^{p-1}k_{\beta,p}
 -\frac{(p-1)\beta\sqrt v}{2^{p-1}}k_{\beta,p}=0,
 \qquad r>0.
\end{equation}
Thus the degree of the monomial boundary is exactly the differential order
of the kernel seed.

Let $\theta=z\,d/dz$ and set
\begin{equation}\label{eq:kernel-operator}
 \mathcal M_p
 =P_p(\theta)-z,
 \qquad
 P_p(\theta)
 =\theta\prod_{a=0}^{p-2}
 \bigl[(p-1)(\theta-1)+a\bigr].
\end{equation}
The coefficient recursion in \eqref{eq:wright-seed} gives
\begin{equation}\label{eq:wright-ode}
 \mathcal M_pW_{p-1}=0.
\end{equation}

We next introduce the beta-moment operator
\begin{equation}\label{eq:beta-transform}
 (\mathcal B_pF)(z)
 =\int_0^1\!\int_0^1
 \frac{F\!\left(z\,t^p(1-t)^{p-1}s^{N}\right)}{1-t}
 \,ds\,dt,
 \qquad F(0)=0.
\end{equation}
On monomials it acts diagonally:
\begin{equation}\label{eq:beta-moments}
 \mathcal B_p(z^j)=\tau_jz^j,
 \qquad
 \tau_j
 =\frac{\Gamma(pj+1)\Gamma((p-1)j)}
 {\Gamma(Nj+2)},
 \qquad j\geq1.
\end{equation}

\begin{proposition}[Wright-to-hypergeometric transmutation]
\label{prop:wright-transmutation}
Define
\begin{equation}\label{eq:H-transform}
 H_p(z)=1+\mathcal B_pW_{p-1}(z).
\end{equation}
Then
\begin{equation}\label{eq:H-series}
 H_p(z)
 =\sum_{j\geq0}
 \frac{\Gamma(pj+1)}{j!\,\Gamma(Nj+2)}z^j.
\end{equation}
Moreover, with
\begin{equation}\label{eq:transported-operator-z}
 A_p(\theta)=\prod_{\ell=0}^{N-1}(N\theta+1-\ell),
 \qquad
 B_p(\theta)=\prod_{r=1}^{p-1}(p\theta+r),
\end{equation}
one has
\begin{equation}\label{eq:H-ode}
 \bigl[A_p(\theta)-pzB_p(\theta)\bigr]H_p=0.
\end{equation}
Consequently, after setting $z=-2^p\rho x^N$,
\begin{equation}\label{eq:Gamma-transform}
 \Gamma_{\rho,p}(x)
 =xH_p(-2^p\rho x^N)
\end{equation}
is precisely the constant-flux transported-zero function, and it satisfies
\begin{equation}\label{eq:transformed-stationary-ode}
 \left\{
 N^{p-1}D^N
 +p2^p\rho\prod_{r=1}^{p-1}
 \bigl[p\vartheta+Nr-p\bigr]
 \right\}\Gamma_{\rho,p}=0,
 \qquad \vartheta=xD.
\end{equation}
\end{proposition}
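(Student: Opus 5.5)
The plan is to work entirely at the level of power-series coefficients. Each claim reduces to an identity between Gamma-function ratios, and analytic issues enter only through the legitimacy of termwise operations.

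\emph{Step 1 (the series \eqref{eq:H-series}).} The Wright series \eqref{eq:wright-seed} is entire in $z$: its coefficients decay like $1/(j!\,\Gamma(mj))$. On the compact square $[0,1]^2$ the argument $z\,t^p(1-t)^{p-1}s^N$ is bounded by $|z|$. The factor $(1-t)^{-1}$ is compensated in every term $j\ge1$ by $(1-t)^{(p-1)j}$. Hence $\sum_j |z|^j\tau_j/(j!\Gamma(mj))$ converges, and Fubini/dominated convergence permits applying $\mathcal B_p$ termwise.

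Using the diagonal action \eqref{eq:beta-moments}, the $j$-th coefficient of $\mathcal B_pW_{p-1}$ is
\[
\frac{\tau_j}{j!\,\Gamma((p-1)j)}=\frac{\Gamma(pj+1)}{j!\,\Gamma(Nj+2)}.
\]
Adding the constant $1$ supplies the $j=0$ term, since $\Gamma(1)/\Gamma(2)=1$. This proves \eqref{eq:H-series}. The moment formula itself is a Beta integral:
\[
\int_0^1 s^{Nj}\,ds=\frac1{Nj+1},\qquad \int_0^1 t^{pj}(1-t)^{(p-1)j-1}\,dt=B\bigl(pj+1,(p-1)j\bigr).
\]
Their product is $\tau_j$ because $\Gamma(Nj+1)(Nj+1)=\Gamma(Nj+2)$.

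\emph{Step 2 (the ODE \eqref{eq:H-ode}).} Write $H_p=\sum c_jz^j$. Since $\theta z^j=jz^j$, the operator equation is equivalent to the two conditions
\[
A_p(0)c_0=0,\qquad A_p(j)c_j=p\,B_p(j-1)\,c_{j-1}\quad (j\ge1).
\]
The first holds because the $\ell=1$ factor of $A_p(0)$ vanishes.

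For the recursion, note
\[
A_p(j)=\frac{\Gamma(Nj+2)}{\Gamma(Nj+2-N)},\qquad B_p(j-1)=\frac{\Gamma(pj)}{\Gamma(pj-p+1)}.
\]
Substituting, both sides reduce to $\Gamma(pj+1)/\bigl(j!\,\Gamma(Nj-N+2)\bigr)$, using $p\,\Gamma(pj)=\Gamma(pj+1)/j$. This is routine bookkeeping.

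\emph{Step 3 (the change of variables).} Put $z=c x^N$ with $c=-2^p\rho$, and $\Gamma_{\rho,p}=xH_p$. Then $\vartheta=N\theta$ on functions of $z$, and $\vartheta(xF)=x(\vartheta+1)F$. Hence $x^ND^N=\prod_{\ell=0}^{N-1}(\vartheta-\ell)$ gives
\[
x^ND^N\Gamma_{\rho,p}=x\prod_{\ell}(N\theta+1-\ell)H_p=x\,A_p(\theta)H_p.
\]
Similarly, on $\Gamma_{\rho,p}$ one has $p\theta+r=\frac1N\bigl(p(\vartheta-1)+Nr\bigr)$, so
\[
x\,B_p(\theta)H_p=N^{-(p-1)}\prod_{r=1}^{p-1}\bigl[p\vartheta+Nr-p\bigr]\Gamma_{\rho,p}.
\]
Multiply \eqref{eq:H-ode} by $x$ and substitute $pz=-p2^p\rho x^N$. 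Then divide by $x^N$; this requires commuting $x^N$ past the $\vartheta$-polynomial, which is harmless because it sits on the left. Finally multiply by $N^{p-1}$ to obtain \eqref{eq:transformed-stationary-ode}.

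\emph{Step 4 (identification).} It remains to identify $\Gamma_{\rho,p}$ with the constant-flux transported-zero function. I would compare Taylor coefficients with the series defining that function in \cite{HDVMonomial}, using the normalizations $\Gamma(x)=x+O(x^{N+1})$ and unit flux at the origin. Alternatively, one can argue by uniqueness: the operator in \eqref{eq:transformed-stationary-ode} has a one-dimensional space of solutions of the form $x(1+O(x^N))$ that are power series in $x^N$, since the indicial recursion fixes every coefficient from $c_0$.

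I expect this identification to be the main obstacle. The difficulty is not analytic but one of conventions: the sign and the power $2^p$ in $z=-2^p\rho x^N$ must match the earlier paper's scaling of $\rho$. I would check it first at $p=2$, where $N=3$ and the function is Airy-type, before trusting the general coefficient match.
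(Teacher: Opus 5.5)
Your proposal is correct and follows essentially the same route as the paper. Beta integrals give $\tau_j$ and hence the series; the coefficient recursion $A_p(j)h_j=pB_p(j-1)h_{j-1}$, together with $A_p(0)=0$, gives the $\theta$-equation; and the substitution $z=-2^p\rho x^N$, using $\vartheta=N\theta$ and $\vartheta x=x(\vartheta+1)$, yields the stationary ODE. Like the paper, you defer the identification with the constant-flux function to \cite{HDVMonomial}, and your uniqueness remark is a sound way to make that step precise once the reference's normalization is fixed: since $A_p(j)\neq0$ for $j\ge1$, the recursion determines every coefficient of a solution $x\sum_j c_jx^{Nj}$ from $c_0$.
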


\begin{proof}
For $j\geq1$, the two integrations in \eqref{eq:beta-transform} give
\[
 \int_0^1t^{pj}(1-t)^{(p-1)j-1}\,dt
 \int_0^1s^{Nj}\,ds
 =\frac{B(pj+1,(p-1)j)}{Nj+1}=\tau_j.
\]
Substitution of \eqref{eq:wright-seed} therefore proves
\eqref{eq:H-series}.  To verify \eqref{eq:H-ode}, write the coefficient of
$z^j$ in $H_p$ as
\[
 h_j=\frac{\Gamma(pj+1)}{j!\,\Gamma(Nj+2)}.
\]
For $j\geq1$, direct cancellation gives
\[
 A_p(j)h_j=pB_p(j-1)h_{j-1}.
\]
The constant coefficient also vanishes because $A_p(0)=0$.  This proves
\eqref{eq:H-ode}.  Finally, the change of variables
$z=-2^p\rho x^N$ converts \eqref{eq:H-ode} into
\eqref{eq:transformed-stationary-ode}.
\end{proof}

The proposition gives a precise connection between the two constructions.
The image kernel is generated by an order-$p$ Wright equation, while its
beta-moment transform is the generalized hypergeometric solution annihilated
by the order-$2p-1$ stationary operator.  The passage
\[
 p\longmapsto p+(p-1)=2p-1
\]
is therefore encoded directly in the beta moments.  This result does not yet
express the complete Volterra solution $n_{\beta,p}$ as a spectral expansion
of transported-zero functions.  It does show, however, that the image kernel
and the previously constructed ODE solution share an exact differential
backbone rather than merely a common scaling exponent.

\section{The factorization defect}

Apply $\T_{-\beta,p}$ to \eqref{eq:operator-image} and set
\begin{equation}\label{eq:defect-def}
 \T_{-\beta,p}\T_{\beta,p}=I+\E_{\beta,p},
 \qquad
 g_{\beta,p}=\T_{-\beta,p}^{2}\delta_1.
\end{equation}
The factorized equation is
\begin{equation}\label{eq:factorized}
 (I+\E_{\beta,p})\nu_{\beta,p}=g_{\beta,p}.
\end{equation}

\begin{proposition}[Exact defect coefficients]\label{prop:defect}
The defect begins at order $\beta^2$.  If
$\E_{\beta,p}=\sum_{n\ge2}\beta^n\E_{n,p}$, then
\begin{align}
E_{n,p}(u,v)
={}&\frac1{2^{nq}}
\sum_{j=1}^{n-1}
\frac{(-1)^jv^{(n-j)/2}}
{j!(n-j)!\Gamma(jq)\Gamma((n-j)q)}\notag\\
&\quad\times\int_v^u
\bigl(w^{j/2}-v^{j/2}\bigr)
(u-w)^{jq-1}(w-v)^{(n-j)q-1}\,dw .
\label{eq:defect-coeff}
\end{align}
In particular,
\begin{align}
E_{2,p}(u,v)
=-\frac{\sqrt v}{2^{2p-2}\Gamma(p-1)^2}
\int_v^u&(\sqrt w-\sqrt v)
(u-w)^{p-2}
(w-v)^{p-2}\,dw .
\label{eq:E2}
\end{align}
\end{proposition}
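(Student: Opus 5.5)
The plan is to expand the composition $\T_{-\beta,p}\T_{\beta,p}=I+\K_{-\beta,p}+\K_{\beta,p}+\K_{-\beta,p}\K_{\beta,p}$ in powers of $\beta$ and collect coefficients. To do this, write $K_{\pm\beta,p}=\sum_{j\ge1}(\pm\beta)^jK_j$ with
\[
 K_j(u,v)=\frac{v^{j/2}(u-v)^{jq-1}}{2^{jq}j!\,\Gamma(jq)},
\]
as in \eqref{eq:kernel}. The two linear terms then combine to
\[
 K_{\beta,p}+K_{-\beta,p}=\sum_{n}\bigl(1+(-1)^n\bigr)\beta^nK_n .
\]
This already kills order $\beta^1$ and leaves only even orders $n\ge2$. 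The product kernel is
\[
 (\K_{-\beta,p}\K_{\beta,p})(u,v)=\int_v^uK_{-\beta,p}(u,w)K_{\beta,p}(w,v)\,dw .
\]
Insert both series and group by total degree $n=j+(n-j)$ with $1\le j\le n-1$. This produces exactly the summand of \eqref{eq:defect-coeff}, except that the factor is $w^{j/2}$ rather than $w^{j/2}-v^{j/2}$. The termwise interchange of sum and integral is justified by the local uniform convergence of \eqref{eq:kernel} on the Volterra triangle, already used in Proposition~\ref{prop:volterra}. Since the product has no terms with $j=0$ or $n-j=0$, it starts at order $\beta^2$.

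The key step is to split $w^{j/2}=(w^{j/2}-v^{j/2})+v^{j/2}$ and to show that the $v^{j/2}$ parts cancel the linear terms. For the $v^{j/2}$ piece, the Beta integral
\[
 \int_v^u(u-w)^{jq-1}(w-v)^{(n-j)q-1}\,dw=\frac{\Gamma(jq)\Gamma((n-j)q)}{\Gamma(nq)}(u-v)^{nq-1}
\]
removes the Gamma factors in the denominator. The sum over $j$ then becomes
\[
 \frac{v^{n/2}(u-v)^{nq-1}}{2^{nq}\Gamma(nq)}\sum_{j=1}^{n-1}\frac{(-1)^j}{j!(n-j)!}
 =\frac{v^{n/2}(u-v)^{nq-1}}{2^{nq}n!\,\Gamma(nq)}\bigl[(1-1)^n-1-(-1)^n\bigr]
 =-\bigl(1+(-1)^n\bigr)K_n(u,v).
\]
This is precisely the negative of the order-$n$ coefficient coming from $K_\beta+K_{-\beta}$. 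Hence all contributions other than the $(w^{j/2}-v^{j/2})$ pieces cancel, and $E_{n,p}$ is given by \eqref{eq:defect-coeff}. This binomial cancellation, which is the finite-difference identity alluded to in the introduction, is the main point to get right. The bookkeeping of signs and of the $j=0,n$ boundary terms is where I expect errors to creep in, so I would verify it separately for $n$ even and $n$ odd.

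Finally, specialize to $n=2$. Only $j=1$ occurs, with sign $-1$, factorials $1$, and $\Gamma(q)^2$ in the denominator, where $q=p-1$ and $2^{2q}=2^{2p-2}$. The exponents are $(u-w)^{p-2}$ and $(w-v)^{p-2}$, which gives \eqref{eq:E2} directly. Since $w\ge v$, the factor $w^{1/2}-v^{1/2}$ is nonnegative, so $E_{2,p}$ is nonpositive and generally nonzero. This confirms that the defect begins at exactly order $\beta^2$.
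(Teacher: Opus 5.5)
Your proposal is correct and follows the paper's proof essentially step by step. Like the paper, you expand $\K_{-\beta,p}+\K_{\beta,p}+\K_{-\beta,p}\K_{\beta,p}$ by total degree, replace $w^{j/2}$ by $v^{j/2}$ in the composition term, evaluate the Beta integral, and use $\sum_{j=1}^{n-1}(-1)^j/(j!(n-j)!)=-(1+(-1)^n)/n!$ to cancel the linear contributions, which leaves exactly the $(w^{j/2}-v^{j/2})$ pieces. Your justification of the termwise interchange and your check that $E_{2,p}<0$ for $u>v$ are correct details that the paper leaves implicit.
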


\begin{proof}
Expand $\K_{-\beta,p}+\K_{\beta,p}+
\K_{-\beta,p}\K_{\beta,p}$ by total degree $n$.  In the composition term,
replace $w^{j/2}$ temporarily by $v^{j/2}$.  The beta integral then produces
\[
 \frac{v^{n/2}(u-v)^{nq-1}}{2^{nq}\Gamma(nq)}
 \sum_{j=1}^{n-1}\frac{(-1)^j}{j!(n-j)!}.
\]
Since
\[
 \sum_{j=1}^{n-1}\frac{(-1)^j}{j!(n-j)!}
 =-\frac{1+(-1)^n}{n!},
\]
this term cancels the degree-$n$ contribution of
$\K_{-\beta,p}+\K_{\beta,p}$.  Restoring
$w^{j/2}-v^{j/2}$ gives \eqref{eq:defect-coeff}; $n=2$ gives
\eqref{eq:E2}.
\end{proof}

The quadratic coefficient is nonpositive and satisfies
\begin{equation}\label{eq:E2-bound}
 |E_{2,p}(u,v)|
 \le \frac{p-1}{2^{2p-1}\Gamma(2p-1)}(u-v)^{2p-2}.
\end{equation}
Indeed, use
$\sqrt w-\sqrt v\le(w-v)/(2\sqrt v)$ and evaluate the beta integral.

\section{Finite-difference cancellation}

The factor $v^{(n-j)/2}$ in \eqref{eq:defect-coeff} appears dangerous as
$v\to\infty$.  The next lemma removes this apparent growth.

\begin{lemma}[Cancellation lemma]\label{lem:cancellation}
For $m\ge1$, define
\[
 S_{n,m}^{(q)}=
 \sum_{j=1}^{n-1}
 \frac{(-1)^j}{j!(n-j)!}
 \binom{j/2}{m}\bigl((n-j)q\bigr)_m.
\]
Then
\begin{equation}\label{eq:cancellation}
 S_{n,m}^{(q)}=0\qquad\text{whenever }2m<n.
\end{equation}
\end{lemma}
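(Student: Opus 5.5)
The plan is to recognize $S_{n,m}^{(q)}$, after multiplication by $n!$, as an $n$-th finite difference of a polynomial in $j$ of degree at most $2m$. Multiplying by $n!$ gives
\[
 n!\,S_{n,m}^{(q)}=\sum_{j=1}^{n-1}(-1)^j\binom{n}{j}f(j),
 \qquad
 f(j)=\binom{j/2}{m}\bigl((n-j)q\bigr)_m .
\]
I will first check that $f$ is a polynomial in $j$ and bound its degree. The factor $\binom{j/2}{m}=\frac{1}{m!}\prod_{i=0}^{m-1}(j/2-i)$ is a polynomial of degree exactly $m$ in $j$. The factor $\bigl((n-j)q\bigr)_m$, a product of $m$ linear factors in $(n-j)q$, is a polynomial of degree at most $m$ in $j$. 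This holds whether $(\cdot)_m$ denotes the rising or the falling factorial. Hence $\deg f\le 2m<n$ under the hypothesis of the lemma.

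Next I will extend the sum to the full range $0\le j\le n$ by showing that both endpoint terms vanish. At $j=0$ we have $\binom{0}{m}=0$, since $m\ge1$ and the product contains the factor $0/2-0=0$. At $j=n$ the second factor is $(0)_m$, which vanishes for $m\ge1$ under either convention, because the first factor of the product is $0$. Therefore
\[
 n!\,S_{n,m}^{(q)}=\sum_{j=0}^{n}(-1)^j\binom{n}{j}f(j)=(-1)^n(\Delta^nf)(0),
\]
where $\Delta$ is the forward difference operator.

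The conclusion then follows from the standard fact that $\Delta$ lowers the degree of a polynomial by at least one. Consequently $\Delta^n$ annihilates every polynomial of degree less than $n$. Equivalently, $\sum_{j=0}^n(-1)^j\binom{n}{j}j^k=0$ for $0\le k<n$, which one obtains by applying $(x\,d/dx)^k$ to $(1-x)^n$ and setting $x=1$. Since $\deg f<n$, we get $S_{n,m}^{(q)}=0$, proving \eqref{eq:cancellation}.

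The only point requiring care is the endpoint bookkeeping. The lemma's sum runs over $1\le j\le n-1$, while the finite-difference identity needs the full range $0\le j\le n$. Both missing terms vanish precisely because $m\ge1$, and this is what ties the hypothesis $m\ge1$ to the statement. Everything else is the degree count $m+m=2m<n$. The identity holds for every real $q$, so no analytic estimates enter.
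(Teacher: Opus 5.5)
Your proposal is correct and follows essentially the same route as the paper: multiply by $n!$, note that $\binom{j/2}{m}\bigl((n-j)q\bigr)_m$ is a polynomial in $j$ of degree at most $2m<n$ that vanishes at $j=0$ and $j=n$, and conclude from the vanishing of the $n$th finite difference of a polynomial of degree below $n$. Your write-up simply spells out the endpoint bookkeeping and the sign $(-1)^n(\Delta^n f)(0)$ more explicitly than the paper does.
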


\begin{proof}
As a function of $j$,
\[
 \binom{j/2}{m}\bigl((n-j)q\bigr)_m
\]
is a polynomial of degree at most $2m$.  Its values at $j=0$ and $j=n$
vanish.  We may therefore add the two endpoint terms.  Up to the factor
$1/n!$, the resulting alternating binomial sum is the $n$th finite difference
of a polynomial of degree smaller than $n$, and hence is zero.
\end{proof}

Put $m_n=\lceil n/2\rceil$.  Taylor's theorem, followed by
Lemma~\ref{lem:cancellation}, yields the global estimate
\begin{equation}\label{eq:global-defect}
 |E_{n,p}(u,v)|
 \le
 \frac{b_{n,p}}{\Gamma(nq+m_n)}
 v^{n/2-m_n}(u-v)^{nq+m_n-1},
\end{equation}
where
\begin{equation}\label{eq:bnp}
 b_{n,p}=\frac{2^{n(2-p)}e^{m_n}(np)^{m_n}}{n!}.
\end{equation}
To see this, the Taylor remainder for $x^{j/2}$ is bounded by
\[
 e^{m_n}v^{j/2-m_n}(w-v)^{m_n};
\]
the remaining beta integral contributes
$((n-j)q)_{m_n}/\Gamma(nq+m_n)$, and
\[
 ((n-j)q)_{m_n}\le(np)^{m_n},\qquad
 \sum_{j=1}^{n-1}\frac1{j!(n-j)!}\le\frac{2^n}{n!}.
\]
Stirling's formula implies
\begin{equation}\label{eq:b-root}
 \limsup_{n\to\infty}b_{n,p}^{1/n}=0.
\end{equation}

The parity consequence is central:
\begin{equation}\label{eq:parity}
 v^{n/2-m_n}=
 \begin{cases}
 1,&n\ \text{even},\\
 v^{-1/2},&n\ \text{odd}.
 \end{cases}
\end{equation}

\section{Weighted tail estimates}

For $\kappa>0$ and $V\ge1$, let
\[
 X_{\kappa,V}=
 \left\{h:\|h\|_{\kappa,V}
 =\sup_{u\ge V}e^{-\kappa(u-V)}|h(u)|<\infty\right\}.
\]
Define the tail defect by integrating only over $V\le v\le u$.
From \eqref{eq:global-defect},
\begin{equation}\label{eq:tail-coeff}
 \|\beta^n\E_{n,p}^{(V)}\|_{\kappa,V}
 \le b_{n,p}|\beta|^n\kappa^{-(nq+m_n)}
 \begin{cases}
 1,&n\ \text{even},\\
 V^{-1/2},&n\ \text{odd}.
 \end{cases}
\end{equation}
Put
\[
 z=\frac{|\beta|}{\kappa^{p-1/2}},
\]
and define the entire functions
\[
 \Phi_p(z)=\sum_{k\ge1}b_{2k,p}z^{2k},
 \qquad
 \Psi_p(z)=\sum_{k\ge1}b_{2k+1,p}z^{2k+1}.
\]
Then
\begin{equation}\label{eq:tail-norm}
 \|\E_{\beta,p}^{(V)}\|_{\kappa,V}
 \le \Phi_p(z)+(\kappa V)^{-1/2}\Psi_p(z).
\end{equation}

Choose $z_p>0$ so that $\Phi_p(z_p)<1/4$, and put
\begin{equation}\label{eq:kappa-choice}
 \kappa_p(\beta)=
 \left(\frac{|\beta|}{z_p}\right)^{2/(2p-1)}.
\end{equation}
Choose $R_p$ so that $R_p^{-1/2}\Psi_p(z_p)<1/4$ and set
\begin{equation}\label{eq:V-choice}
 V_p(\beta)=\frac{R_p}{\kappa_p(\beta)}.
\end{equation}
It follows that the tail norm is smaller than $1/2$.

For completeness, the factorized forcing also lies in the weighted space.
The only nontrivial component is
\[
 H_{\beta,p}(u)=\int_1^u
 K_{-\beta,p}(u,w)K_{-\beta,p}(w,1)\,dw.
\]
The coefficient of $\beta^n$ satisfies, with $s=u-1$,
\begin{equation}\label{eq:forcing-bound}
 |H_{n,p}(u)|
 \le\frac{2^{n(3/2-q)}}{n!\Gamma(nq)}
 s^{n(q+1/2)-3/2},\qquad s\ge1.
\end{equation}
After multiplication by $e^{-\kappa s}$, Stirling's formula gives another
entire majorant in $z$.  The single-kernel part
$K_{-\beta,p}(u,1)$ is subexponential.  The compact-history term obtained by
integrating over $1\le v\le V_p(\beta)$ satisfies the same type of estimate.

\begin{lemma}[Compact--tail continuation]\label{lem:continuation}
Let $I+\mathcal A$ be a locally invertible Volterra operator.  Suppose its
tail restriction to $X_{\kappa,V}$ has norm smaller than one, and the forcing
after subtraction of the compact history belongs to $X_{\kappa,V}$.  Then the
local solution on $[1,V]$ extends uniquely to $[1,\infty)$, and its tail lies
in $X_{\kappa,V}$.
\end{lemma}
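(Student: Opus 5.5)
We split the unknown at $V$ and reduce the tail equation to a contraction on $X_{\kappa,V}$. Write the Volterra equation as $h+\mathcal A h=g$ on $[1,\infty)$ with kernel $A(u,v)$ supported on $1\le v\le u$. Local invertibility gives a unique continuous solution $h_0$ on $[1,V]$. For $u\ge V$ we then split
\[
 \int_1^uA(u,v)h(v)\,dv=\int_1^VA(u,v)h_0(v)\,dv+\int_V^uA(u,v)h(v)\,dv .
\]
The first term is the compact history, which is known once $h_0$ is fixed. The second is the tail operator $\mathcal A^{(V)}$ acting on $h|_{[V,\infty)}$. The tail unknown $k=h|_{[V,\infty)}$ must therefore satisfy
\[
 k+\mathcal A^{(V)}k=\widetilde g,\qquad \widetilde g(u)=g(u)-\int_1^VA(u,v)h_0(v)\,dv,
\]
and by hypothesis $\widetilde g\in X_{\kappa,V}$.

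Since $\|\mathcal A^{(V)}\|_{\kappa,V}<1$, the Neumann series $\sum_{n\ge0}(-\mathcal A^{(V)})^n\widetilde g$ converges in the Banach space $X_{\kappa,V}$. It produces the unique solution $k\in X_{\kappa,V}$, with $\|k\|_{\kappa,V}\le\|\widetilde g\|_{\kappa,V}/(1-\|\mathcal A^{(V)}\|)$. Completeness of $X_{\kappa,V}$ is standard, since it is a weighted sup-norm space. We define $h=h_0$ on $[1,V]$ and $h=k$ on $[V,\infty)$.

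It remains to check that this $h$ solves the full equation, that it is continuous, and that it is unique. The equation holds for $u\le V$ by construction of $h_0$, and for $u\ge V$ by the splitting above.

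Continuity at $u=V$ comes from evaluating the tail equation there. The tail integral vanishes at $u=V$, so $k(V)=\widetilde g(V)=g(V)-\int_1^VA(V,v)h_0(v)\,dv=h_0(V)$. The last equality is the local equation at $V$. Continuity for $u>V$ follows from continuity of the kernel and forcing, which is assumed implicitly in ``locally invertible Volterra operator''.

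For uniqueness, suppose $h_1,h_2$ are two solutions whose tails lie in $X_{\kappa,V}$. On $[1,V]$ they agree by local uniqueness. Their tail difference then satisfies $(I+\mathcal A^{(V)})(k_1-k_2)=0$ in $X_{\kappa,V}$, and this forces $k_1=k_2$ because the operator norm is less than one.

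The main obstacle is bookkeeping rather than depth. The tail norm bound is only for $\mathcal A^{(V)}$, not for the whole operator, so uniqueness has to be phrased within the class of solutions whose tail has exponential type $\kappa$. We also need the compact-history term to be measurable and locally bounded so that $\widetilde g$ is a well-defined element of the space. Outside that class, uniqueness still follows on each $[1,U]$ from local Volterra uniqueness. Combining this with the previous step shows that any continuous solution coincides with the constructed one, so the restriction is harmless.
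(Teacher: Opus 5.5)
Your proposal is correct and follows the paper's proof. You solve locally on $[1,V]$, move the compact history $\int_1^V A(u,v)h_0(v)\,dv$ into the forcing, invert the tail equation by the Neumann series on $X_{\kappa,V}$, and obtain agreement at $u=V$ and uniqueness from the Volterra structure; you also write out the details the paper compresses into one line, namely $k(V)=h_0(V)$ and uniqueness both via the contraction bound and via local Volterra uniqueness.
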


\begin{proof}
Solve first on $[1,V]$.  For $u\ge V$, move the integral over $[1,V]$ into
the forcing.  The remaining equation is inverted by the Neumann series on
$X_{\kappa,V}$.  Agreement at $V$ and uniqueness follow from the Volterra
form.
\end{proof}

\begin{theorem}[Image density]\label{thm:image-density}
For every integer $p\ge2$ and every $\beta\in\R$, equation
\eqref{eq:operator-image} has a unique locally finite solution
\[
 \nu_{\beta,p}=\delta_1+n_{\beta,p}(u)\,du.
\]
There are constants $C_p,C_{\beta,p}>0$ such that
\begin{equation}\label{eq:exp-type}
 |n_{\beta,p}(u)|
 \le C_{\beta,p}
 \exp\!\left(C_p|\beta|^{2/(2p-1)}u\right),
 \qquad u\ge1.
\end{equation}
For $\beta=0$, $n_{0,p}=0$.
\end{theorem}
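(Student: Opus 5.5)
The plan is to work with the factorized equation \eqref{eq:factorized} rather than directly with \eqref{eq:volterra-density}, because only the defect $\E_{\beta,p}$ has the cancellation of Lemma~\ref{lem:cancellation} that makes the tail estimates uniform in $v$.

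\textbf{Step 1: existence and uniqueness.} Since $\T_{-\beta,p}$ is itself a Volterra operator of the second kind with a kernel continuous on the triangle, it is injective on locally finite measures on every $[1,U]$. Hence \eqref{eq:operator-image} and \eqref{eq:factorized} have the same locally finite solutions.

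Standard local Volterra theory, already invoked after Remark~\ref{rem:two-volterra}, gives a unique continuous solution $n_{\beta,p}$ of \eqref{eq:volterra-density} on every compact $[1,U]$. The kernel $K_{\beta,p}(u,v)$ is integrable near the diagonal: it behaves like $(u-v)^{q-1}$ and $q\ge1$. Compatibility between intervals follows from uniqueness, so these local solutions patch together into a unique solution on $[1,\infty)$.

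A locally finite solution must have atom exactly $\delta_1$. Indeed, the right side of \eqref{eq:operator-image} contains $\delta_1$, and $\K_{\beta,p}$ maps measures into locally integrable functions, so any additional singular part would have to vanish. This proves existence and uniqueness.

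For $\beta=0$ both kernels vanish, so the right side of \eqref{eq:volterra-density} is zero, and uniqueness gives $n_{0,p}=0$.

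\textbf{Step 2: the growth bound via Lemma~\ref{lem:continuation}.} For $\beta\ne0$, take $\kappa=\kappa_p(\beta)$ from \eqref{eq:kappa-choice} and $V=\max\{1,V_p(\beta)\}$ from \eqref{eq:V-choice}. Then \eqref{eq:tail-norm} gives $\|\E^{(V)}_{\beta,p}\|_{\kappa,V}<1/2$.

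Next, the forcing $g_{\beta,p}$ minus the $\delta_1$ atom, with the compact-history integral over $1\le v\le V$ moved to the right, must be shown to lie in $X_{\kappa,V}$. This has three components.
\begin{enumerate}
\item The single-kernel part $K_{-\beta,p}(u,1)$. By \eqref{eq:kernel-wright} it is $W_{p-1}$ of an argument of order $u^{p-1/2}$, and Wright asymptotics give growth like $\exp(c|\beta|^{2/(2p-1)}u)$. It therefore lies in $X_{\kappa,V}$ as long as $\kappa$ carries a large enough multiple of $|\beta|^{2/(2p-1)}$; shrinking $z_p$ if necessary achieves this.
\item The part $H_{\beta,p}$. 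It is controlled termwise by \eqref{eq:forcing-bound}: multiplying by $e^{-\kappa s}$ and taking the supremum in $s$ gives a bound $c_n z^n$ with $\limsup c_n^{1/n}=0$, so the sum is finite.
\item The compact-history term $\int_1^V E_{\beta,p}(u,v)\,n_{\beta,p}(v)\,dv$. It is bounded by $\sup_{[1,V]}|n_{\beta,p}|$ times the kernel estimate \eqref{eq:global-defect}, using $v\ge1$. This again gives an entire majorant in $z$.
\end{enumerate}

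Lemma~\ref{lem:continuation} then yields $\sup_{u\ge V}e^{-\kappa(u-V)}|n_{\beta,p}(u)|<\infty$. Together with boundedness on $[1,V]$ this gives \eqref{eq:exp-type}, with $C_p=z_p^{-2/(2p-1)}$ (possibly enlarged) and all $\beta$-dependence placed in $C_{\beta,p}$.

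\textbf{Step 3: rigorous tail-norm and forcing estimates.} This is where I expect the main obstacle: making \eqref{eq:tail-norm} and the forcing bounds fully rigorous. The difficulty is the Taylor step behind \eqref{eq:global-defect}, where Lemma~\ref{lem:cancellation} must kill all terms of order below $m_n$ uniformly in $v\ge V$, and the Wright-function asymptotics for the single-kernel term.

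I would first check \eqref{eq:global-defect} by writing
\[
 w^{j/2}-v^{j/2}=\sum_{m<m_n}\binom{j/2}{m}v^{j/2-m}(w-v)^m+R_j .
\]
The beta integral against $(u-w)^{jq-1}(w-v)^{(n-j)q-1+m}$ produces exactly the factor $((n-j)q)_m$ times a $j$-independent expression. The $j$-sums therefore reduce to $S^{(q)}_{n,m}$, which vanish for $2m<n$ by Lemma~\ref{lem:cancellation}, leaving only the remainder $R_j$.

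Summation over $n$ is then justified by \eqref{eq:b-root}. It gives absolute convergence of the defect series in operator norm on $X_{\kappa,V}$, which legitimizes applying the Neumann series.
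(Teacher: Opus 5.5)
Your proposal is correct and follows the paper's proof. The steps match: local Volterra solvability, passage to the factorized equation \eqref{eq:factorized}, Neumann inversion of the tail on $X_{\kappa,V}$ via Lemma~\ref{lem:continuation}, and injectivity of $\T_{-\beta,p}$ to identify the factorized solution with the solution of \eqref{eq:operator-image}; you run that identification in the opposite direction from the paper, which is equivalent.

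There are two harmless slips. First, with $v=1$ the Wright argument in $K_{-\beta,p}(u,1)$ is of order $u^{p-1}$, not $u^{p-1/2}$, and $W_{p-1}$ is entire of order $1/p$. So this term is subexponential, as the paper states, and no shrinking of $z_p$ is needed. Second, the atom $\delta_1$ in $\nu_{\beta,p}$ contributes $-E_{\beta,p}(u,1)$ to the forcing. That term is bounded by \eqref{eq:global-defect} exactly like your compact-history term.
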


\begin{proof}
For $\beta\ne0$, local Volterra theory gives the solution on
$[1,V_p(\beta)]$.  Estimates \eqref{eq:tail-norm}--\eqref{eq:forcing-bound}
and Lemma~\ref{lem:continuation} give a unique solution of the factorized
equation with \eqref{eq:exp-type}.  To see that no spurious solution was
introduced, set
\[
 R=\T_{\beta,p}\nu_{\beta,p}-\T_{-\beta,p}\delta_1.
\]
The factorized equation gives $\T_{-\beta,p}R=0$.  A homogeneous Volterra
equation has only the zero local solution, so $R=0$ on every compact interval.
Thus the original equation holds.
\end{proof}

\section{The killed heat kernel and first-passage density}

Let $\mu_{\rho,p}$ be the push-forward of $\nu_{\beta,p}$ under
$u\mapsto d+a\sqrt u$.  Explicitly, define
\begin{align}
P_{\rho,p}(t,x)
={}&\G(t,x-x_0)-\G(t,x-d-a)\notag\\
&-\int_1^\infty
\G(t,x-d-a\sqrt u)n_{\beta,p}(u)\,du.
\label{eq:final-heat}
\end{align}

\begin{theorem}[Monomial first passage]\label{thm:main}
Let $p\ge2$, $a>0$, $\rho\in\R$, $x_0=d-a$, and
$F(t)=d+\rho t^p$.  There exists $c_p>0$, depending only on $p$,
such that the following statements hold.  For $\rho\ne0$, define
\begin{equation}\label{eq:analytic-geometric-times}
 T_{\mathrm{an}}=c_p|\rho|^{-2/(2p-1)},
 \qquad
 T_{\mathrm{geom}}=
 \begin{cases}
  +\infty,&\rho\ge0,\\
  (a/|\rho|)^{1/p},&\rho<0.
 \end{cases}
\end{equation}
\eqref{eq:final-heat} converges, together with the derivatives required by the
heat equation, whenever
\begin{equation}\label{eq:time-window}
 0<t<T_{\mathrm{an}}.
\end{equation}
On the interval
\begin{equation}\label{eq:probabilistic-window}
 0<t<T_{\rho,a,p}:=T_{\mathrm{an}}\wedge T_{\mathrm{geom}},
\end{equation}
$P_{\rho,p}$ is the transition density of Brownian motion started at $x_0$
and killed at $F$.  The first-passage time has density
\begin{equation}\label{eq:flux}
 q_{\rho,p}(t)=-\frac12\partial_xP_{\rho,p}(t,F(t)^-),
\end{equation}
namely
\begin{align}
q_{\rho,p}(t)
={}&\frac{a+\rho t^p}{2t}\G(t,a+\rho t^p)
-\frac{\rho t^p-a}{2t}\G(t,\rho t^p-a)\notag\\
&-\frac1{2t}\int_1^\infty
(\rho t^p-a\sqrt u)\G(t,\rho t^p-a\sqrt u)
n_{\beta,p}(u)\,du.
\label{eq:explicit-flux}
\end{align}
For $\rho=0$, the construction is valid for all $t>0$ and reduces to the
classical constant-boundary L\'evy density
\[
 q_{0,p}(t)=\frac{a}{\sqrt{2\pi t^3}}
 \exp\!\left(-\frac{a^2}{2t}\right).
\]
\end{theorem}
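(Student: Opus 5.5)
The plan is to prove the statement in four steps: convergence of the image integral on the time window, the boundary identity, identification with the killed kernel, and the flux formula.

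\textbf{Step 1: convergence.} This rests on Theorem~\ref{thm:image-density}. For fixed $t$ and $x$ in a compact set, the Gaussian factor in \eqref{eq:final-heat} satisfies
\[
 \G(t,x-d-a\sqrt u)\le C\exp\!\Bigl(-\frac{a^2u}{2t}+\frac{a\sqrt u\,|x-d|}{t}\Bigr).
\]
By \eqref{eq:exp-type}, $n_{\beta,p}(u)$ grows at most like $\exp(C_p|\beta|^{2/(2p-1)}u)$. With $\beta=\rho a^{2p-1}$ from \eqref{eq:scaling}, one has $|\beta|^{2/(2p-1)}=|\rho|^{2/(2p-1)}a^2$, so the factor $a^2$ cancels. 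The integral therefore converges absolutely and locally uniformly as soon as $1/(2t)>C_p|\rho|^{2/(2p-1)}$. I would take $c_p<1/(2C_p)$, so that this holds for $t<T_{\mathrm{an}}$. The same domination applies to the $x$- and $t$-derivatives, since these only add polynomial factors in $\sqrt u$ and $1/t$. Differentiation under the integral is then legitimate, and $P_{\rho,p}$ solves $\partial_tP=\tfrac12\partial_x^2P$ on $(0,T_{\mathrm{an}})\times\R$.

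\textbf{Step 2: the boundary identity.} Read Proposition~\ref{prop:volterra} backwards. Since $\nu_{\beta,p}$ solves \eqref{eq:operator-image} and has the exponential type just established, taking Laplace transforms is valid for $\lambda=a^2/(2t)$ beyond the abscissa of convergence. This is exactly the range $t<T_{\mathrm{an}}$, and it gives \eqref{eq:laplace-image}. Undoing the scaling \eqref{eq:scaling} and the push-forward \eqref{eq:correct-push} returns \eqref{eq:image-z}, hence $P_{\rho,p}(t,F(t))=0$. For the initial condition, the image measure is supported on $[d+a,\infty)$. On any compact subset of $\{x<d\}$, each Gaussian in the image part is bounded by $\exp(-c/t)$ times the same integrable majorant. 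So $P_{\rho,p}(t,\cdot)\to\delta_{x_0}$ as $t\downarrow0$ in the domain. The same bounds give Gaussian decay as $x\to-\infty$, uniformly on compact time subintervals.

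\textbf{Step 3: identification with the killed kernel.} This is the step I expect to be the main obstacle. For $t<T_{\mathrm{geom}}$ we have $F(t)>x_0$, so the domain $D=\{(t,x):x<F(t)\}$ is a nondegenerate space-time region containing the starting point. The killed density $p^{\mathrm{kill}}$ is characterized as the unique solution of the heat equation in $D$ with zero Dirichlet data on $x=F(t)$, initial datum $\delta_{x_0}$, and controlled growth at $-\infty$. To show $P_{\rho,p}=p^{\mathrm{kill}}$, I would first try a duality argument. Fix $T<T_{\rho,a,p}$ and a test function $\phi$. Let $\psi(s,x)=\mathbb E_x[\phi(B_{T-s})\mathbf 1_{\{\tau>T-s\}}]$ be the backward solution in $D$ with zero boundary values. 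Differentiate $s\mapsto\int_{-\infty}^{F(s)}P_{\rho,p}(s,x)\psi(s,x)\,dx$. Green's identity gives zero derivative: both functions vanish on the moving boundary, so the boundary-velocity term and the flux terms cancel, and the $-\infty$ end is controlled by Gaussian decay. Letting $s\downarrow0$ recovers $\psi(0,x_0)$. This identifies $\int_{-\infty}^{F(T)} P_{\rho,p}(T,x)\phi(x)\,dx$ with $\mathbb E_{x_0}[\phi(B_T);\tau>T]$. The delicate points are the regularity of $\psi$ near the corner at $s=T$ and uniform integrability near $x=F(s)$; both are standard for smooth boundaries. Positivity of $\mu$ is never needed, because the argument is linear.

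\textbf{Step 4: the flux formula.} The density of $\tau_F$ is
\[
 -\frac{d}{dt}\int_{-\infty}^{F(t)}P\,dx=-F'(t)P(t,F(t))-\int_{-\infty}^{F(t)}\tfrac12\partial_x^2P\,dx .
\]
The first term vanishes, and the second equals $-\tfrac12\partial_xP(t,F(t)^-)$, which is \eqref{eq:flux}. Differentiating \eqref{eq:final-heat} termwise, using $\partial_x\G(t,y)=-(y/t)\G(t,y)$ at $x=F(t)$, yields \eqref{eq:explicit-flux}. For $\rho=0$ we have $\beta=0$, so $n_{0,p}=0$ by Theorem~\ref{thm:image-density}. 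Then there is no analytic restriction, $T_{\mathrm{geom}}=\infty$, and the first two terms combine to $\tfrac{a}{t}\G(t,a)$, the L\'evy density.
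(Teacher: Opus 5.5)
Your proposal is correct. Steps 1, 2 and 4 follow the paper's argument:
\begin{itemize}
\item Gaussian domination of the image integral, using \eqref{eq:exp-type} together with $|\beta|^{2/(2p-1)}=|\rho|^{2/(2p-1)}a^2$.
\item Reduction of $P_{\rho,p}(t,F(t))=0$ to \eqref{eq:laplace-image}.
\item Differentiation of the survival mass $\int_{-\infty}^{F(t)}P_{\rho,p}\,dx$.
\end{itemize}

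You genuinely depart from the paper in the identification step.

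\textbf{The paper's route.} The paper cites Lee for the existence of a killed transition density solving the moving Dirichlet problem. It then proves uniqueness in the Gaussian-decay class: subtract two solutions, truncate at $x=-R$, apply the maximum principle, let $R\to\infty$, and let the initial time tend to $0$.

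\textbf{Your route.} You pair $P_{\rho,p}$ with the backward function $\psi$ and compute $\int P_{\rho,p}(T,x)\phi(x)\,dx=\mathbb E_{x_0}[\phi(B_T);\tau_F>T]$ directly. This needs no a priori information about the probabilistic kernel. The Dirac datum enters only through the weak limit of $P_{\rho,p}(s,\cdot)$ against $\psi(s,\cdot)$, which is continuous at the interior point $(0,x_0)$. Continuity of the hitting density also comes for free, because the survival function is then the explicit smooth integral of $P_{\rho,p}$. The cost is classical boundary regularity for the backward problem, namely $\psi_x$ continuous up to $x=F(s)$.

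\textbf{Comparison.} The paper's route is shorter by citation. However, its final limit implicitly requires $\sup_x|P_{\rho,p}(\varepsilon,x)-p^{\mathrm{kill}}(\varepsilon,x)|\to0$ as $\varepsilon\downarrow0$. That is an additional fact about the killed kernel which your argument never needs.

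\textbf{Points to tighten in your version.}
\begin{itemize}
\item Because the boundary is time-dependent, define $\psi(s,x)=\mathbb E\bigl[\phi(B_T)\mathbf 1\{B_r<F(r),\ s\le r\le T\}\mid B_s=x\bigr]$ rather than using the time-homogeneous notation.
\item Take $\phi$ continuous with compact support in $(-\infty,F(T))$, which removes the corner incompatibility at $s=T$. Then let $\phi\uparrow1$ to recover the survival function used in Step 4.
\item The limit $s\downarrow0$ needs the image part to vanish in $L^1((-\infty,F(s)))$, not only on compact subsets of $\{x<d\}$, since $F(s)\to d$. This follows from your Gaussian bound: for $x\le F(s)$ and $s$ small one has $x-d-a\sqrt u\le-a\sqrt u/2$.
\end{itemize}
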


\begin{proof}
By Theorem~\ref{thm:image-density}, the image integral is dominated by an
exponential with coefficient
\[
 -\frac{a^2}{2t}+C_p|\beta|^{2/(2p-1)}
\]
in $u$.  Since $|\beta|^{2/(2p-1)}=|\rho|^{2/(2p-1)}a^2$, it is negative
under \eqref{eq:time-window}, after decreasing $c_p$ if necessary.  Gaussian
domination permits differentiation under the integral.

At $x=F(t)$, cancellation of the common factor generated by
$\rho^2t^{2p}$ reduces $P_{\rho,p}(t,F(t))=0$ exactly to
\eqref{eq:laplace-image}.  The image support lies strictly to the right of the
initial domain, so $P_{\rho,p}(t,\cdot)\to\delta_{x_0}$ distributionally on
$(-\infty,d)$ as $t\downarrow0$.

On \eqref{eq:probabilistic-window} the boundary remains strictly above the
starting point.  Since $F$ is smooth, the stopped Brownian motion has a killed
transition density solving the moving Dirichlet problem, and its hitting time
has a continuous density; see, for example, Lee~\cite{Lee}.  This Dirichlet
problem is unique in the present Gaussian-decay class.  Indeed, subtract two
solutions, truncate the domain at $x=-R$, apply the parabolic maximum
principle, and let first $R\to\infty$ and then the initial time tend to zero.
The probabilistic killed kernel and $P_{\rho,p}$ have the same equation,
boundary trace and interior initial datum, so they coincide.  In particular,
the signed image formula represents a nonnegative sub-probability density.
Finally, differentiating the survival probability
\[
 \int_{-\infty}^{F(t)}P_{\rho,p}(t,x)\,dx
\]
and using the zero boundary value gives \eqref{eq:flux}.  Differentiating
\eqref{eq:final-heat} gives \eqref{eq:explicit-flux}.
\end{proof}

\begin{remark}[Natural scale]
The time exponent follows independently from Brownian scaling:
\[
 |\rho|t^p\asymp\sqrt t
 \quad\Longleftrightarrow\quad
 t\asymp|\rho|^{-2/(2p-1)}.
\]
The same exponent is forced analytically by the quadratic defect
$\beta^2\E_{2,p}$.
\end{remark}

\section{Numerical verification}

The proof is analytic; the following computations serve as independent checks
of the transforms and signs.  We discretized \eqref{eq:volterra-density} by a
midpoint product rule with $\beta=0.1$ and solved the resulting lower-triangular
linear system directly.  We then substituted the result both into
\eqref{eq:laplace-image} and into the classical Fortet equation
\begin{equation}\label{eq:fortet}
 \G(t,F(t)-x_0)
 =\int_0^t q_{\rho,p}(s)
 \G(t-s,F(t)-F(s))\,ds.
\end{equation}
The tested relative residual ranges are shown in Table~\ref{tab:numerics}.

\begin{table}[ht]
\centering
\caption{Independent residual checks for $\beta=0.1$.}
\label{tab:numerics}
\begin{tabular}{ccc}
\toprule
$p$ & Laplace identity & Fortet identity \\
\midrule
2 & $6.0\times10^{-6}$--$4.1\times10^{-5}$
  & $6.4\times10^{-6}$--$1.5\times10^{-5}$ \\
3 & $3.3\times10^{-7}$--$5.3\times10^{-7}$
  & $1.6\times10^{-7}$--$1.7\times10^{-7}$ \\
4 & $4.6\times10^{-12}$--$9.1\times10^{-7}$
  & $2.7\times10^{-12}$--$7.6\times10^{-9}$ \\
5 & $3.1\times10^{-13}$--$1.1\times10^{-6}$
  & $2.9\times10^{-13}$--$7.4\times10^{-9}$ \\
\bottomrule
\end{tabular}
\end{table}

The first perturbative coefficients also agree with
\eqref{eq:first-order}.  The relatively larger $p=2$ errors are consistent
with the fact that its leading kernel does not vanish on the diagonal; for
$p\ge3$ the kernel has the factor $(u-v)^{p-2}$.

\subsection{A cubic first-passage experiment}

We next compare the image flux directly with simulated Brownian first-passage
times.  Take
\begin{equation}\label{eq:numerical-cubic-parameters}
 d=0,\qquad x_0=-1,\qquad a=1,\qquad
 F(t)=0.1t^3.
\end{equation}
Thus $p=3$ and $\beta=\rho a^5=0.1$.  We solved
\eqref{eq:volterra-density} on $1\le u\le32$ with $1800$ midpoint cells.
The kernel series was truncated only after its next term was smaller than
$10^{-15}$ relative to the accumulated sum.  The resulting signed image
density was then substituted into \eqref{eq:explicit-flux}.

For the independent Monte Carlo calculation, we generated $120{,}000$
Brownian paths on $0\le t\le1.2$ with time step $10^{-3}$.  Endpoint
monitoring alone systematically misses crossings between adjacent grid
times.  We therefore used the conditional Brownian-bridge crossing
probability
\begin{equation}\label{eq:bridge-correction}
 \exp\!\left(-\frac{2g_0g_1}{\Delta t}\right),
\end{equation}
when the two endpoint gaps $g_0,g_1$ below the linearly interpolated boundary
were positive.  Over one time step, replacing the cubic boundary by its chord
introduces an error of smaller order than the time discretization used here.
The histogram is normalized by all simulated paths, rather than only by paths
that hit before the terminal time, so it estimates the unconditional density.

The observed hitting mass by time $1.2$ was
\[
 \widehat{\mathbb P}(\tau_F\le1.2)=0.33089167,
\]
whereas numerical integration of the generalized-image flux gave
\[
 \int_0^{1.2}q_{0.1,3}(t)\,dt=0.33128863.
\]
Their absolute difference is $3.97\times10^{-4}$, approximately $0.12\%$ of
the mass and well below the binomial Monte Carlo standard error
\[
 \sqrt{\frac{0.331(1-0.331)}{120000}}\simeq1.36\times10^{-3}.
\]
The root-mean-square discrepancy between the histogram and the theoretical
density over $0.15\le t\le1.15$ was $0.01319$.

Figure~\ref{fig:cubic-simulation} shows representative Brownian paths and the
density comparison.  The image density used to construct the red curve is
signed: on the truncated computational interval its values range from about
$-2.55$ to $0.62$.  Nevertheless, the heat solution and its first-passage
flux are nonnegative, as predicted by Theorem~\ref{thm:main}.

\begin{figure}[ht]
\centering
\includegraphics[width=\textwidth]{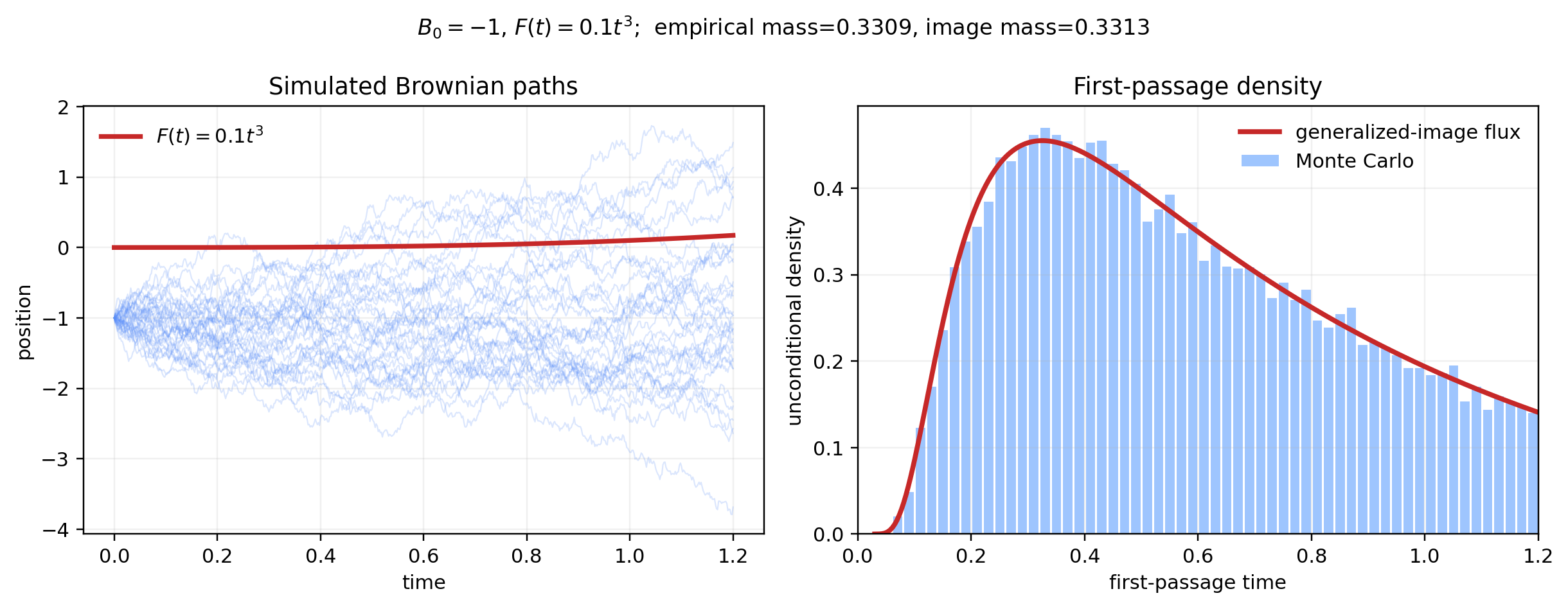}
\caption{Cubic first-passage experiment with $B_0=-1$ and
$F(t)=0.1t^3$.  Left: representative Brownian paths and the moving boundary.
Right: the unconditional first-passage histogram from $120{,}000$
bridge-corrected simulations and the generalized-image boundary flux.}
\label{fig:cubic-simulation}
\end{figure}

\section{Relation with inverse transported zeros}

Equation \eqref{eq:tz} defines an inverse kernel associated with the prescribed
curve $F$.  The development leading to the present construction began with
the observation that a heat solution vanishing on a moving curve can be
generated by transporting a stationary solution of an ordinary differential
equation.  In the quadratic problem, the spectral parameter of the Airy
equation is invisible to the boundary compatibility relation.  The resulting
Sturm--Liouville family reconstructs a Dirac initial datum and produces a
Brownian hitting-time density \cite{HDVGuerra}.  That argument is the
probabilistic model for the present paper, but no comparable completeness
relation was available for higher monomials.

The compatibility-matrix theory \cite{HDVMatrices} systematized the direct
problem: starting from a stationary differential operator, it combines its
heat transport with successive boundary identities to determine possible
moving zeros.  The Heat--Airy construction \cite{HDVHeatAiry} added a
commuting third-order flow and reorganized the same compatibility information
into a higher-flow hierarchy.  These developments explain how to derive
boundary equations, but they do not prescribe an initial distribution or
construct a killed transition kernel.

Stationary grading \cite{HDVGrading} then identified $2p-1$ as the natural
coefficient step for a degree-$p$ boundary and showed why the naive family
$D^p+\rho x^{p-1}$ ceases to produce an exact monomial zero in the first
nontrivial higher case.  This obstruction motivated the inverse formulation:
prescribe $F$ first and determine elements of its transported-zero kernel.
The inverse analysis \cite{HDVInverse} showed that this kernel is generally
large and that different normalizations can have different differential
structures.  Finally, the constant-flux normalization selected an explicit
hypergeometric function annihilated by an order-$2p-1$ polynomial-coefficient
operator for every $p\ge2$ \cite{HDVMonomial}.

All of those selections are function-valued.  They solve the zero-trace
problem, but they do not have the interior Dirac mass required for Brownian
first passage.  The present paper makes instead the distributional selection
\begin{equation}\label{eq:distributional-selection}
 \gamma_{x_0,F}=\delta_{x_0}-\mu_{\rho,p},
\end{equation}
where the second term is supported outside the initial domain.  Accordingly,
\[
 (P_t\gamma_{x_0,F})(F(t))=0.
\]
This normalization is the missing probabilistic ingredient: the unit interior
atom imposes the starting point, while the exterior distribution enforces
absorption without changing the initial data inside the domain.

The integer $2p-1$ appears in both theories.  In the stationary theory it is
the coefficient step of the monomial grading.  Here it controls
\[
 \beta=\rho a^{2p-1},\qquad
 \kappa\asymp|\beta|^{2/(2p-1)},\qquad
 t\asymp|\rho|^{-2/(2p-1)}.
\]
Thus the generalized-image construction is not a consequence of the
finite-order ODE theory, but it is a distributional branch of the same inverse
transported-zero program.  Proposition~\ref{prop:wright-transmutation} makes
the relationship exact: the Wright seed of the image-coordinate Volterra
kernel is carried by beta moments to the hypergeometric constant-flux
selection of \cite{HDVMonomial}.  The earlier ODE hierarchy therefore reveals
the differential structure hidden in the probabilistic image construction,
while the image measure supplies the distributional initial condition that
the function-valued theory lacked.

\section{Concluding remarks}

The construction gives a signed exterior image and a first-passage density for
every monomial degree on a nonzero, naturally scaled interval.  Several
questions remain.  The constant in the exponential-type bound is not sharp;
the exact quadratic defect suggests that the leading candidate is
\[
 \frac{(p-1)^{1/(2p-1)}}2|\beta|^{2/(2p-1)}.
\]
It is also open whether the representation can be continued for all times,
whether the image distribution changes analytic type at the predicted scale,
and which nonmonomial boundaries admit an analogous factorization.  Finally,
the signed nature of the image measure for positive convex monomials deserves
comparison with positive-measure representability in the classical inverse
method of images.

Finally, the construction should not be interpreted as another Volterra
scheme for solving directly for the first-passage density.  Classical
first-kind and second-kind equations in time already provide such
characterizations.  Here the time-independent exterior image measure is
constructed once through the second-kind equation
\eqref{eq:volterra-density}; the first-passage density is then recovered by
one evaluation of its boundary flux.  The familiar Fortet equation is used
only as an independent identity and numerical check.  What is new is the
Wright-kernel equation in the image coordinate and the differential
structure linking that kernel to the transported-zero hierarchy.

\section*{Acknowledgements}

The author used OpenAI's ChatGPT during the development of this work to
assist with symbolic exploration, numerical implementation, and language
editing.  All mathematical statements, computations, references, and
conclusions were reviewed by the author, who takes full responsibility for
the content of the manuscript.

\section*{Competing interests}

The author declares no competing interests.

\section*{Data availability}

No external data sets were used.  The numerical experiment uses simulated
Brownian paths and the algorithms described in Section~8.


\begin{thebibliography}{99}

\bibitem{CHK}
S.~Christensen, O.~Hallmann and M.~Klein,
\newblock On first passage time problems of Brownian motion---the inverse
method of images revisited,
\newblock \emph{arXiv:2404.16615}, 2024.

\bibitem{Durbin}
J.~Durbin,
\newblock The first-passage density of the Brownian motion process to a curved
boundary,
\newblock \emph{J. Appl. Probab.} \textbf{29} (1992), 291--304.

\bibitem{HDVGuerra}
G.~Hern\'andez-del-Valle and W.~A. Guerra-Polania,
\newblock On the heat equation with a moving boundary and applications to
hitting times for Brownian motion,
\newblock \emph{Stoch. Anal. Appl.} \textbf{42} (2024), 201--218.

\bibitem{HDVMatrices}
G.~Hern\'andez-del-Valle,
\newblock A recursive representation of compatibility matrices for
transported differential operators,
\newblock \emph{arXiv:2608.11365}, 2026.

\bibitem{HDVHeatAiry}
G.~Hern\'andez-del-Valle,
\newblock Heat--Airy transport of differential operators and moving
boundaries,
\newblock \emph{arXiv:2608.19434}, 2026.

\bibitem{HDVGrading}
G.~Hern\'andez-del-Valle,
\newblock Stationary grading and heat-transported moving boundaries: from the
cubic case to two fourth-order branches,
\newblock \emph{arXiv:2609.27914}, 2026.

\bibitem{HDVInverse}
G.~Hern\'andez-del-Valle,
\newblock Inverse transported-zero problems for the heat semigroup:
finite-order and non-$D$-finite quartic selections,
\newblock preprint, 2026.

\bibitem{HDVMonomial}
G.~Hern\'andez-del-Valle,
\newblock Monomial moving zeros for the heat semigroup: an explicit
finite-order differential hierarchy,
\newblock \emph{arXiv:2610.27326}, 2026.

\bibitem{JKV}
S.~Jaimungal, A.~Kreinin and A.~Valov,
\newblock Integral equations and the first passage time of Brownian motions,
\newblock \emph{Methodol. Comput. Appl. Probab.} \textbf{11} (2009),
757--774.

\bibitem{Kahale}
N.~Kahale,
\newblock Analytic crossing probabilities for certain barriers by Brownian
motion,
\newblock \emph{Ann. Appl. Probab.} \textbf{18} (2008), 1424--1440.

\bibitem{Lee}
J.~Lee,
\newblock First passage time densities through H\"older curves,
\newblock \emph{ALEA Lat. Am. J. Probab. Math. Stat.} \textbf{15} (2018),
1137--1155.

\bibitem{Lerche}
H.~R. Lerche,
\newblock \emph{Boundary Crossing of Brownian Motion: Its Relation to the Law
of the Iterated Logarithm and to Sequential Analysis},
\newblock Lecture Notes in Statistics 40, Springer, 1986.

\bibitem{Peskir}
G.~Peskir,
\newblock On integral equations arising in the first-passage problem for
Brownian motion,
\newblock \emph{J. Integral Equations Appl.} \textbf{14} (2002), 397--423.

\bibitem{Wright}
E.~M. Wright,
\newblock The asymptotic expansion of the generalized Bessel function,
\newblock \emph{Proc. London Math. Soc.} (2) \textbf{38} (1935), 257--270.

\bibitem{Zipkin}
P.~Zipkin,
\newblock On Durbin's series for the density of first passage times,
\newblock \emph{J. Appl. Probab.} \textbf{48} (2011), 713--722.

\end{thebibliography}
\end{document}